\def\ps@pprintTitle{%
  \let\@oddhead\@empty
  \let\@evenhead\@empty
  \let\@oddfoot\@empty
  \let\@evenfoot\@oddfoot
}
\numberwithin{figure}{section}
\numberwithin{equation}{section} 
\newtheorem{definition}{Definition} [section]            
\newtheorem{theorem}{Theorem}[section]            
\newtheorem{lemma}{Lemma} [section]
\newtheorem{proposition}{Proposition}[section]
\newdefinition{corollary}{Corollary}[section]
\newtheorem{remark}{Remark}
\def\dchi{\scalebox{1.2}{$\chi$}}
\def\dint{\displaystyle\int}
\def\trace{{\rm{trace}}~}          
\DeclareMathOperator*{\bmo}{BMO}
\DeclareMathOperator*{\lip}{Lip}
\newcommand{\mathd}{\mathrm{d}}
\begin{document}

\begin{frontmatter}

\title{{\bfseries  Some estimates for commutators of the fractional  maximal function on stratified Lie groups }}

\author[mymainaddress]{Jianglong Wu\corref{mycorrespondingauthor}} %
\cortext[mycorrespondingauthor]{Corresponding author}
\ead{jl-wu@163.com}

\author[mysecondaryaddress]{Wenjiao Zhao}

\address[mymainaddress]{Department of Mathematics, Mudanjiang Normal University, Mudanjiang  157011, China}
\address[mysecondaryaddress]{School of Mathematics, Physics and Finance, Anhui Polytechnic University, Wuhu 241000, China}

\begin{abstract}
In this paper, the main aim  is to consider the boundedness of  the   fractional maximal commutator $M_{\alpha,b}$ and the nonlinear commutator   $[b, M_{\alpha}]$  on the Lebesgue spaces over some stratified Lie group  $\mathbb{G}$ when $b$ belongs to the Lipschitz space, by which some new characterizations of the Lipschitz spaces on Lie group  are given.
\end{abstract}

\begin{keyword}
stratified Lie group, fractional maximal function, Lipschitz function, commutator

\MSC[2020]   Primary 42B35, 43A80, 26A16, 26A33
\end{keyword}

\end{frontmatter}


\section{Introduction and main results}
\label{sec:introduction}

During the last several decades, stratified groups appear in quantum physics and many parts of mathematics, including  several complex variables, Fourier analysis, geometry, and topology \cite{folland1982hardy,varopoulos2008analysis}.
The geometry structure of stratified groups is so good that it inherits a lot of analysis properties from the Euclidean spaces \cite{stein1993harmonic,grafakos2009modern}.
Apart from this, the difference between the geometry structures of Euclidean spaces and stratified groups makes the study of function spaces on them more complicated. However, many harmonic analysis problems  on stratified Lie groups deserve a further investigation since  most results   of the theory of Fourier transforms and distributions in Euclidean spaces  cannot yet be duplicated.

It is worthwhile to note that the  fractional maximal operator plays an important role in real and harmonic analysis and applications, such as potential theory and partial differential equations (PDEs),  since it  is intimately related to the Riesz potential operator, which  is a powerful tool in the study of  the smooth function spaces  (see \cite{folland1982hardy,bonfiglioli2007stratified,carneiro2017derivative}). On the other hand,  there are two major reasons  why the study of the  commutators  has got widespread attention. The first one is that the boundedness of commutators can produce some characterizations of function spaces \cite{janson1978mean,paluszynski1995characterization}. The other one is that the theory of commutators is intimately related to the regularity properties of the solutions of certain PDEs  \cite{chiarenza1993w2,difazio1993interior,ragusa2004cauchy,bramanti1995commutators}.

Let $T$ be the classical singular integral operator. The Coifman-Rochberg-Weiss type commutator $[b, T]$ generated by $T$ and a
suitable function $b$ is defined by
\begin{align} \label{equ:commutator-1}
 [b,T]f      & = bT(f)-T(bf).
\end{align}
A well-known result shows that  $[b,T]$ is bounded on $L^{p}(\mathbb{R}^{n})$  for $1<p<\infty$ if and only if $b\in \bmo(\mathbb{R}^{n})$ (the space of bounded mean oscillation functions). The sufficiency was provided  by Coifman et al.\cite{coifman1976factorization} and the necessity was obtained by Janson \cite{janson1978mean}.
Furthermore, Janson \cite{janson1978mean} also established some characterizations of the Lipschitz space $\Lambda_{\beta}(\mathbb{R}^{n})$ via commutator \labelcref{equ:commutator-1} and  proved that  $[b,T]$ is bounded from $L^{p}(\mathbb{R}^{n})$ to $L^{q}(\mathbb{R}^{n})$   for $1<p<n/\beta$ and $1/p-1/q=\beta/n$ with $0<\beta<1$ if and only if  $b\in \Lambda_{\beta}(\mathbb{R}^{n})$  (see also Paluszy\'{n}ski \cite{paluszynski1995characterization}).



Denote by $\mathbb{G}$  and $\mathbb{R}$ the sets of   groups and   real numbers separately. Let $0\le \alpha <Q$ and $f: \mathbb{G} \to \mathbb{R}$ be a locally integrable function,  the fractional maximal function is given by
\begin{align*}
   M_{\alpha}(f)(x)    &=  \sup_{B\ni x  \atop B\subset \mathbb{G}}  \dfrac{1}{|B|^{1-\alpha/Q}} \dint_{B}  |f(y)| \mathd y,
\end{align*}
where the supremum is taken over all $ \mathbb{G}$-balls $B\subset \mathbb{G}$  containing $x$ with radius $r>0$ , and $|B|$   represents  the Haar measure of the $ \mathbb{G}$-ball $B$ (for the notations and notions, see  \cref{sec:preliminary} below).
When $\alpha=0$, we simply write  $M $ instead of $M_{0}$, which is the  Hardy-Littlewood maximal function defined as
\begin{align*}   
   M(f)(x)    &=  \sup_{B\ni x\atop B\subset \mathbb{G}}  \dfrac{1}{|B|} \dint_{B}  |f(y)| \mathd y.
\end{align*}


Similar to  \labelcref{equ:commutator-1},  we can define two different kinds of commutator of the fractional maximal function as follows.
\begin{definition} \label{def.commutator-frac-max}
 Let  $0 \le \alpha<n$ and $b$ be  a locally integrable function on $\mathbb{G}$.
\begin{enumerate}[label=(\roman*)]  
\item The maximal commutator of $M_{\alpha}$ with $b$ is given by
\begin{align*}
 M_{\alpha,b} (f)(x) &= \sup_{B\ni x \atop B\subset \mathbb{G}}  \dfrac{1}{|B|^{1-\alpha/Q}} \dint_{B} |b(x)-b(y)| |f(y)| \mathd y,
\end{align*}
where the supremum is taken over all $ \mathbb{G}$-balls $B\subset\mathbb{G}$ containing $x$.
 \item  The nonlinear commutators generated by  $M_{\alpha}$ and $b$  is defined by
\begin{align*}
[b,M_{\alpha}] (f)(x) &= b(x) M_{\alpha} (f)(x) -M_{\alpha}(bf)(x).
\end{align*}
\end{enumerate}
\end{definition}

When $\alpha=0$, we simply denote by $[b,M]=[b,M_{0}]$ and $M_{b} =M_{0,b} $.

We call $[b,M_{\alpha}] $ the nonlinear commutator because it is not even a sublinear operator,
although the commutator $[b,T]$ is a linear one. It is worth noting that the nonlinear commutator $[b,M_{\alpha}] $ and the maximal commutator $M_{\alpha,b}$ essentially differ from each other.
For example, $M_{\alpha,b}$ is positive and sublinear, but $[b,M_{\alpha}] $ is neither positive nor sublinear.

 In  1990, by using real interpolation techniques, Milman and Schonbek\cite{milman1990second} established a commutator result that applies to the Hardy-Littlewood maximal function as well as to a large class of nonlinear operators.
 In 2000, Bastero et al.\cite{bastero2000commutators}    proved the necessary and sufficient condition for the boundedness of  the  nonlinear    commutators $[b,M]$ and $[b,M^{\sharp}]$ on $L^{p}$ spaces. In 2009,  Zhang and Wu\cite{zhang2009commutators} studied the same problem for $[b,M_{\alpha}]$.
In 2017, Zhang\cite{zhang2017characterization} considered some new characterizations of the Lipschitz spaces via the boundedness of maximal commutator $M_{b}$ and the (nonlinear) commutator   $[b, M]$ in Lebesgue spaces and Morrey spaces on Euclidean spaces.
In 2018, Zhang et al.\cite{zhang2018commutators} gave necessary and sufficient conditions for the boundedness of the nonlinear commutators $[b,M_{\alpha}]$ and $[b,M^{\sharp}]$  on Orlicz spaces when the symbol $b$ belongs to Lipschitz spaces, and obtained some new characterizations of non-negative Lipschitz functions.
Recently, Guliyev\cite{guliyev2022some}
extended the mentioned results to  Orlicz spaces $L^{\Phi} (\mathbb{G})$ over some stratified Lie group when the symbols belong to $\bmo(\mathbb{G})$.
And Liu et al.\cite{liu2022characterisation} established the characterization of BMO spaces by the
boundedness of some commutator   in variable Lebesgue spaces. Meanwhile, Wu and Zhao \cite{wu2023characterizationlip} extended some results of \cite{zhang2017characterization} to  stratified Lie group when the symbols belong to the Lipschitz spaces.

Motivated by the papers mentioned above, the purpose of this paper is to   study the boundedness of the  fractional maximal commutator $M_{\alpha,b}$  and the nonlinear commutator   $[b, M_{\alpha}]$ on the Lebesgue spaces in the context of some stratified Lie group  $\mathbb{G}$ when $b\in \Lambda_{\beta}(\mathbb{G})$, by which some new characterizations of the Lipschitz spaces are given.

To state the results, we also give the following notations.

Let $\alpha\ge 0$, for a fixed $\mathbb{G}$-ball $B^{*}$, the fractional maximal function with respect to $B^{*}$ of a locally integrable function $f$ is given by
\begin{align*}
M_{\alpha,B^{*}} (f)(x) &= \sup_{ B\ni x \atop  B\subset B^{*}} \dfrac{1}{|B|^{1-\alpha/Q}} \dint_{B} |f(y)| \mathd y,
\end{align*}
where the supremum is taken over all $\mathbb{G}$-balls $B$  such that $x\in B\subset B^{*}$. When $\alpha= 0$,  we simply write $M_{B^{*}}$ instead of $M_{0,B^{*}}$.

Our main results can be stated as follows.

\begin{theorem} \label{thm:nonlinear-frac-max-lip}
Let  $0 <\beta <1$, $0 <\alpha <Q$, $0 <\alpha+\beta <Q$  and let $b$ be a locally integrable function on $\mathbb{G}$.    Then the following  assertions  are equivalent:
\begin{enumerate} [label=(A.\arabic*) ]
  \item   $b\in \Lambda_{\beta}(\mathbb{G})$ and $b\ge 0$.
    \label{enumerate:nonlinear-frac-max-lip-1}
   \item $[b,M_{\alpha} ]$ is bounded from $L^{p}(\mathbb{G})$ to $L^{q}(\mathbb{G})$ for all $p$ and $q$ satisfy $1 <p < \frac{Q}{\alpha+\beta}$ and $\frac{1}{q} =\frac{1}{p} - \frac{\alpha+\beta}{Q}$.
    \label{enumerate:nonlinear-frac-max-lip-2}
 \item $[b,M_{\alpha} ]$ is bounded from $L^{p}(\mathbb{G})$ to $L^{q}(\mathbb{G})$ for some  $p$ and $q$ such that  $1 <p < \frac{Q}{\alpha+\beta}$ and $\frac{1}{q} =\frac{1}{p} - \frac{\alpha+\beta}{Q}$.
    \label{enumerate:nonlinear-frac-max-lip-3}
   \item  There exists $s\in   [1,\infty)$ such that
\begin{align}  \label{inequ:nonlinear-frac-max-lip-4}
  \sup_{B}  \dfrac{1}{|B|^{\beta/Q}} \left(  \dfrac{1}{|B|} \dint_{B}  |b(x) -|B|^{-\alpha/Q}M_{\alpha,B}(b)(x)  |^{s} \mathd x \right)^{1/s}  < \infty.
\end{align}
    \label{enumerate:nonlinear-frac-max-lip-4}
   \item  For all $s \in   [1,\infty)$  such that \labelcref{inequ:nonlinear-frac-max-lip-4} holds.
 \label{enumerate:nonlinear-frac-max-lip-5}
\end{enumerate}
\end{theorem}
\begin{remark}   \label{rem.nonlinear-frac-max-lip}
\begin{enumerate}[ label=(\roman*)]
\item For the case $\alpha=0$, the equivalence of \labelcref{enumerate:nonlinear-frac-max-lip-1}, \labelcref{enumerate:nonlinear-frac-max-lip-2} and \labelcref{enumerate:nonlinear-frac-max-lip-4}  was proved   in \cite{wu2023characterizationlip} (see Theorem 1.3).
\item  Moreover, it was proved in Theorem 1.3 of  \cite{wu2023characterizationlip}, see also \cref{lem:non-negative-max-lip} below, that  $b\in  \Lambda_{\beta}(\mathbb{G})$  and $b\ge 0$ if and only if
\begin{align} \label{inequ:1.3-wu2022characterization}
\sup_{B} |B|^{-\beta/Q}  \left( |B|^{-1} \dint_{B}  |b(x) -M_{B}(b)(x)  |^{q} \mathd x \right)^{1/q}    <\infty.
\end{align}
Compared with \labelcref{inequ:1.3-wu2022characterization},  \labelcref{inequ:nonlinear-frac-max-lip-4} gives a new characterization for nonnegative Lipschitz functions.
\end{enumerate}
\end{remark}

Next, we consider some necessary and sufficient conditions for the   boundedness of $M_{\alpha,b}$  when   $b$ belongs to a Lipschitz space.

\begin{theorem} \label{thm:frac-max-lip}
Let $0 <\beta <1$, $0 <\alpha  <\alpha+\beta <Q$ and $b$ be a locally integrable function  on $\mathbb{G}$. Then the following statements are equivalent:
\begin{enumerate}[label=(B.\arabic*)] 
  \item   $b\in \Lambda_{\beta}(\mathbb{G})$.
    \label{enumerate:thm-frac-max-lip-1}
   \item $ M_{\alpha,b} $ is bounded from $L^{p}(\mathbb{G})$ to $L^{q}(\mathbb{G})$ for all $p, q$ with $1<p<\frac{Q}{\alpha+\beta} $ and $\frac{1}{q}  = \frac{1}{p}  -\frac{\alpha+\beta}{Q}$.
    \label{enumerate:thm-frac-max-lip-2}
\item $ M_{\alpha,b} $ is bounded from $L^{p}(\mathbb{G})$ to $L^{q}(\mathbb{G})$ for some $p, q$ with $1<p<\frac{Q}{\alpha+\beta} $ and $\frac{1}{q}  = \frac{1}{p}  -\frac{\alpha+\beta}{Q}$.
    \label{enumerate:thm-frac-max-lip-3}
   \item There exists $s\in   [1,\infty)$ such that   
\begin{align} \label{inequ:frac-max-lip-4}
 \sup_{B}  \dfrac{1}{|B|^{\beta/Q}} \Big( \dfrac{1}{|B|} \dint_{B} |b(x)-b_{B}|^{s} \mathd x \Big)^{1/s} &< \infty.
\end{align}
    \label{enumerate:thm-frac-max-lip-4}
  \item   \labelcref{inequ:frac-max-lip-4} holds for  all $s\in   [1,\infty)$. 
    \label{enumerate:thm-frac-max-lip-5}
\end{enumerate}
\end{theorem}
\begin{remark}
\begin{enumerate}[label=(\roman*)]  
\item   The equivalence of \labelcref{enumerate:thm-frac-max-lip-1}, \labelcref{enumerate:thm-frac-max-lip-2} and \labelcref{enumerate:thm-frac-max-lip-3}  was proved in \cite{wu2023characterizationlip} (see Theorem 1.1 for $\alpha=0$). The equivalence of \labelcref{enumerate:thm-frac-max-lip-1}, \labelcref{enumerate:thm-frac-max-lip-4} and \labelcref{enumerate:thm-frac-max-lip-5} is contained in \cref{lem:2.2-li2003lipschitz} below.
\item When $\mathbb{G} =\mathbb{R}^{n}$, the  above  equivalence  was proved in \cite{zhang2019some} (see Corollary 1.3).
 \item  For the case $\alpha= 0$ and $\mathbb{G} =\mathbb{R}^{n}$, similar results were given in \cite{zhang2017characterization}  for Lebesgue spaces with constant exponents, and in \cite{zhang2019some,zhang2019characterization}  for the variable case.
\end{enumerate}
\end{remark}


This paper is organized as follows. In  \cref{sec:preliminary}, we will recall some basic definitions and known results. In  \cref{sec:proof-mab}, we will prove  main results.

Throughout this paper, the letter $C$  always stands for a constant  independent of the main parameters involved and whose value may differ from line to line.
In addition, we  give some notations. Here and hereafter $L^{p} ~(1\le p\le \infty)$ 	 will always denote the  standard $L^{p} $-space with respect to the Haar measure $\mathd x$, with the $L^{p} $-norm $\|\cdot\|_{p}$. And let $WL^{p}$ be weak-type  $L^{p} $-space. Denote by $\chi_{E}$  the  characteristic function of a measurable set $E$ of $\mathbb{G}$.

\section{Preliminaries and lemmas}
\label{sec:preliminary}

To prove the main results of this paper, we first recall some necessary notions and remarks.
Firstly, we recall some preliminaries concerning stratified Lie groups (or so-called
Carnot groups). We refer the reader to  \cite{folland1982hardy,bonfiglioli2007stratified,stein1993harmonic}.

\subsection{Lie group $\mathbb{G}$}

 \begin{definition}
 \label{def:stratified-Lie-algebra-krantz1982lipschitz}
  Let $m\in \mathbb{Z}^{+}$, $\mathcal{G}$  be a finite-dimensional Lie algebra,  $[X, Y] = XY - YX \in \mathcal{G}$ be Lie bracket with $X,Y  \in \mathcal{G}$.
\begin{enumerate}[label=(\roman*)]
\item If $Z \in \mathcal{G}$ is an $m^{\text{th}}$  order Lie bracket and $W \in \mathcal{G}$, then $[Z,W]$ is an $(m + 1)^{\text{st}}$ order  Lie bracket.
\item  We say $\mathcal{G}$  is a nilpotent Lie algebra of step  $m$ if  $m$ is the smallest integer for which all Lie brackets of order $m+1$ are zero.
\item   We say that  a   Lie algebra $\mathcal{G}$   is  stratified  if there is a direct sum vector space decomposition
\begin{align}\label{equ:lie-algebra-decomposition}
 \mathcal{G} =\oplus_{j=1}^{m} V_{j}  = V_{1} \oplus  \cdots \oplus  V_{m}
\end{align}
such that $\mathcal{G}$ is nilpotent of step $m$, that is,
\begin{align*}
 [V_{1},V_{j}] =
 \begin{cases}
 V_{j+1} &  1\le j \le  m-1  \\
 0 & j\ge m
\end{cases}
\end{align*}
 holds.
\end{enumerate}
\end{definition}

It is not difficult to find that the above $V_{1}$  generates the whole of the Lie algebra $\mathcal{G}$ by taking Lie brackets since each element of $ V_{j}~(2\le j \le m)$  is a linear combination of $(j-1)^{\text{th}}$ order Lie bracket  of elements of $ V_{1}$.

With the help of the related  notions of Lie algebra (see \cref{def:stratified-Lie-algebra-krantz1982lipschitz}), the following definition can be obtained.
 \begin{definition}\label{def:stratified-Lie-group}
  Let $\mathbb{G}$ be a finite-dimensional, connected and simply-connected Lie group   associated with Lie algebra $\mathcal{G}$. Then
\begin{enumerate}[label=(\roman*)]  
\item  $\mathbb{G}$  is called nilpotent if its Lie algebra $\mathcal{G}$ is nilpotent.
\item  $\mathbb{G}$ is said to be stratified if its Lie algebra $\mathcal{G}$ is stratified.
\item  $\mathbb{G}$ is called homogeneous if it is a nilpotent Lie group whose Lie algebra $\mathcal{G}$ admits a family of dilations $\{\delta_{r}\}$, namely, for $r>0$, $X_{k}\in V_{k}~(k=1,\ldots,m)$,
\begin{align*}
  \delta_{r} \Big( \sum_{k=1}^{m} X_{k} \Big)  =  \sum_{k=1}^{m} r^{k} X_{k},
\end{align*}
which are Lie algebra automorphisms.
\end{enumerate}
\end{definition}

\begin{remark}  \label{rem:lie-algebra-decom-zhu2003herz}  %
Let $\mathcal{G} =  \mathcal{G}_{1}\supset  \mathcal{G}_{2} \supset \cdots \supset  \mathcal{G}_{m+1} =\{0\}$   denote the lower central series of  $\mathcal{G}$, and $X=\{X_{1},\dots,X_{n}\}$ be a basis for $V_{1}$ of $\mathcal{G}$.
\begin{enumerate}[label=(\roman*) ]  
\item  (see \cite{zhu2003herz})  The direct sum decomposition  \labelcref{equ:lie-algebra-decomposition} can be constructed by identifying each $\mathcal{G}_{j}$ as a vector subspace of $\mathcal{G}$ and setting $ V_{m}=\mathcal{G}_{m}$ and $ V_{j}=\mathcal{G}_{j}\setminus \mathcal{G}_{j+1}$ for $j=1,\ldots,m-1$.
\item   (see \cite{folland1979lipschitz}) The number $Q=\trace A =\sum\limits_{j=1}^{m} j\dim(V_{j})$ is called the homogeneous dimension of $\mathcal{G}$, where $A$ is a diagonalizable linear transformation of  $\mathcal{G}$ with positive eigenvalues.
\item  (see \cite{zhu2003herz} or \cite{folland1979lipschitz})   
The number  $Q$ is also called the homogeneous dimension of $\mathbb{G}$  since $\mathd(\delta_{r}x)=r^{Q}\mathd x$ for all $r>0$, and
\begin{align*}
 Q = \sum_{j=1}^{m} j \dim(V_{j}) = \sum_{j=1}^{m} \dim(\mathcal{G}_{j}).
\end{align*}
\end{enumerate}
\end{remark}

By the Baker-Campbell-Hausdorff formula for sufficiently small elements $X$ and $Y$ of $\mathcal{G}$ one has
\begin{align*}
 \exp X \exp Y=  \exp H(X,Y)= X+Y +\frac{1}{2}[X,Y]+\cdots
\end{align*}
where $\exp : \mathcal{G} \to \mathbb{G}$ is the exponential map, $H(X, Y )$ is an infinite linear
combination of $X$ and $Y$ and their Lie brackets, and the dots denote terms of order higher than two.  And the above equation is finite in the case  of  $\mathcal{G}$ is a nilpotent Lie algebra.

The following properties can be found in \cite{ruzhansky2019hardy}(see Proposition 1.1.1,  or Proposition 1.2 in \cite{folland1982hardy}).

\begin{proposition}\label{pro:2.1-yessirkegenov2019}
 Let $\mathcal{G}$ be a nilpotent Lie algebra, and let $\mathbb{G}$ be the corresponding connected and simply-connected nilpotent Lie group. Then we have
\begin{enumerate}[label=(\roman*) ]  
\item   The exponential map  $\exp: \mathcal{G} \to \mathbb{G}$  is a diffeomorphism. Furthermore, the group law $(x,y) \mapsto xy$ is a polynomial map if  $\mathbb{G}$ is identified with $\mathcal{G}$ via $\exp$.
\item  If $\lambda$ is a Lebesgue measure on  $\mathcal{G}$, then $\exp\lambda$ is a bi-invariant Haar measure on  $\mathbb{G}$ (or a bi-invariant Haar measure $\mathd  x$ on  $\mathbb{G}$  is just the lift of Lebesgue measure on  $\mathcal{G}$ via $\exp$).
\end{enumerate}
\end{proposition}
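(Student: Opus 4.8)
The plan is to establish the two assertions separately, in both cases leaning on the Baker--Campbell--Hausdorff (BCH) formula, which for a nilpotent Lie algebra of step $m$ is a \emph{finite} sum and therefore makes $\exp$ the carrier of a polynomial group law on $\mathcal{G}$.

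For assertion (1), I would first note that nilpotency forces $H(X,Y)$ in $\exp X\exp Y=\exp H(X,Y)$ to be a finite linear combination of iterated brackets, so that (once $\exp$ is known to be bijective) the transported operation $(X,Y)\mapsto H(X,Y)$ on $\mathcal{G}$ is polynomial, with polynomial inversion $X\mapsto -X$ and polynomial ``logarithm''. Since $d\exp_{0}=\mathrm{id}$, $\exp$ is a local diffeomorphism near the origin; the real point is that it is a \emph{global} diffeomorphism. I would prove this by induction on the step $m$: the case $m=1$ is abelian, where $\exp$ is a linear isomorphism; for the inductive step, choose a nontrivial central ideal $\mathfrak z\subseteq\mathcal{G}$ (for instance a line in the center, or the last nonzero term of the lower central series), observe that $\exp\mathfrak z$ is a closed, connected, simply connected, normal subgroup of $\mathbb{G}$, that $\mathbb{G}/\exp\mathfrak z$ is connected, simply connected and nilpotent of step $<m$ with Lie algebra $\mathcal{G}/\mathfrak z$, and that the exponential maps of $\mathfrak z$, $\mathbb{G}$ and $\mathbb{G}/\exp\mathfrak z$ sit in a commuting diagram; a diagram chase using the inductive hypothesis (and triviality of simply connected coverings) gives bijectivity of $\exp:\mathcal{G}\to\mathbb{G}$, and smoothness of $\exp^{-1}$ then follows from the polynomial description.

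For assertion (2), I would compute, in the exponential coordinates provided by (1), the Jacobian of a left translation $L_a:X\mapsto H(a,X)$. Expanding $H(a,X)=a+X+\tfrac{1}{2}[a,X]+\cdots$ and retaining only the part linear in $X$, one sees that $d(L_a)_X$ equals $\mathrm{id}$ plus an operator assembled from brackets with $a$; with respect to a basis subordinate to the descending central series this operator is strictly triangular, hence nilpotent, so $\det d(L_a)_X\equiv 1$. Thus Lebesgue measure $\lambda$ on $\mathcal{G}$ is invariant under all left translations of the transported group, i.e.\ $\exp_{*}\lambda$ is a left Haar measure on $\mathbb{G}$; the same computation for $R_a:X\mapsto H(X,a)$ yields right invariance, so $\exp_{*}\lambda$ is bi-invariant (equivalently, $\mathbb{G}$ is unimodular).

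The main obstacle is the global bijectivity of $\exp$ in assertion (1): the local statement is immediate from $d\exp_{0}=\mathrm{id}$, but upgrading it to a global diffeomorphism genuinely uses simple connectedness together with the induction on the nilpotency step through a central ideal. After that, the polynomiality of the group law and the Jacobian computation in assertion (2) are routine unwindings of the finite BCH series. Since this proposition is quoted from the literature, one may alternatively regard its ``proof'' as the assembly of the cited classical facts.
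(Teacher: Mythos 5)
The paper itself offers no proof of this proposition: it is stated as a quotation from the literature, with pointers to Proposition 1.1.1 of \cite{ruzhansky2019hardy}, Proposition 2.1 of \cite{yessirkegenov2019function}, and Proposition 1.2 of \cite{folland1982hardy}. So there is no in-paper argument to compare yours against; what you have written is, in outline, exactly the classical proof found in those references (and in Corwin--Greenleaf), and it is essentially sound. Two points deserve care if you were to write it out in full. First, in the inductive step for assertion (1), the claim that $\exp\mathfrak z$ is a \emph{closed} subgroup and that $\mathbb{G}/\exp\mathfrak z$ is simply connected is not free: one must avoid circularity, since the usual route to closedness of $\exp\mathfrak z$ already uses properties of $\exp$ on the smaller algebra; the standard fix is to run the induction on dimension with a one-dimensional central subalgebra, where injectivity and closedness of $\exp$ restricted to $\mathfrak z$ are immediate. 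Second, in assertion (2) the phrase ``retaining only the part linear in $X$'' is imprecise: $d(L_a)_X$ at a general point $X$ picks up contributions from all terms of the finite BCH polynomial, not just the linear ones; the correct statement is that every term of $H(a,X)$ other than $X$ itself is an iterated bracket of length at least two, so its differential in $X$ maps each layer $V_j$ into strictly deeper layers of the lower central series, whence the Jacobian matrix is unipotent in an adapted basis and has determinant $1$. With those repairs the argument is complete, and it supplies a self-contained justification for a statement the paper merely cites.
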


Thereafter, we use $Q$ to denote the homogeneous dimension of  $\mathbb{G}$, $y^{-1}$ represents the inverse of $y\in \mathbb{G}$,  $y^{-1}x$ stands for the group multiplication of $y^{-1}$  by $x$ and the group identity  element of $\mathbb{G}$ will be referred to as the origin denotes by $e$.

A homogenous norm on $\mathbb{G}$ is a continuous function $x\to \rho(x)$  from $\mathbb{G}$ to $[0,\infty)$, which is  $C^{\infty}$ on $\mathbb{G}\setminus\{0\}$ and satisfies
\begin{align*}
\begin{cases}
 \rho(x^{-1}) =  \rho(x), \\
 \rho(\delta_{t}x) =  t\rho(x) \ \ \text{for all}~  x \in \mathbb{G} ~\text{and}~ t > 0, \\
 \rho(e) =  0.
\end{cases}
\end{align*}
Moreover, there exists a constant $c_{0} \ge 1$ such that $\rho(xy) \le c_{0}(\rho(x) + \rho(y))$ for all $x,y \in \mathbb{G}$.

 With the norm above, we define the $\mathbb{G}$ ball centered at $x$ with radius $r$ by $B(x, r) = \{y \in \mathbb{G} : \rho(y^{-1}x) < r\}$,  and by $\lambda B$ denote the ball $B(x,\lambda r)$  with $\lambda>0$, let $B_{r} = B(e, r) = \{y \in \mathbb{G}  : \rho(y) < r\}$ be the open ball centered at $e$ with radius $r$,  which is the image under $\delta_{r}$ of $B(e, 1)$.
 And by $\sideset{^{\complement}}{}  {\mathop {B(x,r)}} = \mathbb{G}\setminus B(x,r)= \{y \in \mathbb{G} : \rho(y^{-1}x) \ge r\}$ denote the complement of $B(x, r)$.  Let  $|B(x,r)|$ be the Haar measure of the ball  $B(x,r)\subset \mathbb{G}$, and
 there exists $c_{1} =c_{1} (\mathbb{G})$ such that
\begin{align*}
  |B(x,r)| = c_{1} r^{Q}, \ \  \ \   x\in \mathbb{G}, r>0.
\end{align*}
In addition,  the Haar measure of a homogeneous Lie group  $\mathbb{G}$  satisfies the doubling condition (see pages 140 and 501,\cite{fischer2016quantization}), i.e. $\forall ~ x\in \mathbb{G}$, $r>0$, $\exists~ C$, such that
\begin{align*}
   |B(x,2r)| \le C |B(x,r)|.
\end{align*}

The most basic partial differential operator in a stratified Lie group is the sub-Laplacian associated with $X=\{X_{1},\dots,X_{n}\}$, i.e., the second-order partial differential operator on  $\mathbb{G}$  given by
\begin{align*}
 \mathfrak{L} =  \sum_{i=1}^{n} X_{i}^{2}
\end{align*}

The following lemma is known as the  H\"{o}lder's inequality on Lebesgue spaces over Lie groups $\mathbb{G}$, it can also be found in  \cite{guliyev2022some}, when  Young function $\Phi(t)=t^{p}$ and its complementary function $\Psi(t)=t^{q}$ with $\frac{1}{p}+\frac{1}{q}=1$.
\begin{lemma}[H\"{o}lder's inequality on  $\mathbb{G}$]\label{lem:holder-inequality-Lie-group}
Let $1\le p,q \le\infty$ with $\frac{1}{p}+\frac{1}{q}=1$, $\Omega\subset  \mathbb{G}$ be a measurable set  and measurable functions $f\in L^{p}(\Omega)$ and $g\in L^{q}(\Omega)$.  Then there exists a positive constant $C$ such that
\begin{align*} 
   \dint_{\Omega} |f(x)g(x)|  \mathrm{d}x \le C \|f\|_{L^{p}(\Omega)} \|g\|_{L^{q}(\Omega)}.
\end{align*}
\end{lemma}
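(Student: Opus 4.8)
The plan is to recognize that this ``H\"{o}lder inequality on $\mathbb{G}$'' is nothing but the classical H\"{o}lder inequality for the measure space $(\mathbb{G},\mathd x)$, so that one may in fact take the constant $C=1$. The first step is to record, via \cref{pro:2.1-yessirkegenov2019}(2), that the bi-invariant Haar measure $\mathd x$ on $\mathbb{G}$ is exactly the push-forward under the (diffeomorphic) exponential map of the Lebesgue measure on the finite-dimensional vector space $\mathcal{G}$. Hence $(\mathbb{G},\mathd x)$ is a $\sigma$-finite Radon measure space on which the usual Lebesgue integration theory applies verbatim; in particular $f\mapsto\int_{\Omega}f\,\mathd x$ is a positive linear functional, so inequalities between integrands that hold almost everywhere pass to their integrals, and this is the only place where the group structure enters at all.

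Next I would dispose of the trivial and endpoint cases. If $\|f\|_{L^{p}(\Omega)}=0$ or $\|g\|_{L^{q}(\Omega)}=0$, then $fg=0$ $\mathd x$-a.e.\ on $\Omega$ and the inequality is immediate; if either norm is infinite there is nothing to prove. When $\{p,q\}=\{1,\infty\}$ (say $p=1$, $q=\infty$), one has $|f(x)g(x)|\le|f(x)|\,\|g\|_{L^{\infty}(\Omega)}$ for a.e.\ $x\in\Omega$, and integrating over $\Omega$ yields the claim with $C=1$.

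For the remaining range $1<p<\infty$ I would normalize by setting $F=f/\|f\|_{L^{p}(\Omega)}$ and $G=g/\|g\|_{L^{q}(\Omega)}$, so that $\|F\|_{L^{p}(\Omega)}=\|G\|_{L^{q}(\Omega)}=1$, and then invoke Young's numerical inequality $ab\le a^{p}/p+b^{q}/q$ (valid for $a,b\ge0$ by concavity of $\log$) with $a=|F(x)|$, $b=|G(x)|$ to obtain $|F(x)G(x)|\le|F(x)|^{p}/p+|G(x)|^{q}/q$ a.e.\ on $\Omega$. Integrating against $\mathd x$ and using $\|F\|_{L^{p}(\Omega)}^{p}=\|G\|_{L^{q}(\Omega)}^{q}=1$ gives $\int_{\Omega}|FG|\,\mathd x\le 1/p+1/q=1$, which is precisely the asserted bound once the normalization is undone. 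I do not expect a genuine obstacle here: beyond the identification of $\mathd x$ as an honest $\sigma$-finite measure in the first step, the argument is purely measure-theoretic and identical to the Euclidean one, and the only points requiring a little care are keeping the almost-everywhere qualifications consistent and treating $p\in\{1,\infty\}$ separately. (Alternatively, one could simply cite the Orlicz-space H\"{o}lder inequality of \cite{rao1991theory} with $\Phi(t)=t^{p}$ and complementary function $\Psi(t)=t^{q}$, exactly as indicated in the statement.)
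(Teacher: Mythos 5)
Your proof is correct. Note, however, that the paper does not actually prove this lemma: it is stated as a known fact, with a pointer to the Orlicz-space H\"{o}lder inequality of \cite{rao1991theory} (see also \cite{guliyev2022some}) specialized to the Young function $\Phi(t)=t^{p}$ with complementary function $\Psi(t)=t^{p'}$ --- precisely the alternative you mention in your closing parenthesis. Your main argument instead gives the self-contained classical proof: after observing via \cref{pro:2.1-yessirkegenov2019} that the Haar measure on $\mathbb{G}$ is the push-forward of Lebesgue measure under the exponential diffeomorphism, so that $(\mathbb{G},\mathd x)$ is an ordinary $\sigma$-finite measure space, you dispose of the degenerate and endpoint cases and then run the standard normalization plus Young's inequality $ab\le a^{p}/p+b^{q}/q$ argument. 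This is entirely valid --- H\"{o}lder's inequality is a purely measure-theoretic statement and the group structure plays no role beyond identifying $\mathd x$ as an honest measure --- and it has the mild advantage of producing the sharp constant $C=1$ rather than an unspecified $C$. The paper's citation route is shorter and matches the Orlicz-space framework used elsewhere in its references, but buys nothing extra for the power-function case at hand.
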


By elementary calculations we have the following property. It can also  be found in \cite{guliyev2022some}, when  Young function $\Phi(t)=t^{p}$.
\begin{lemma}[Norms of characteristic functions]\label{lem:norm-characteristic-functions-Lie-group}
Let $0<p<\infty$ and $\Omega\subset  \mathbb{G}$ be a measurable set with finite Haar measure. Then
\begin{align*} 
  \|\dchi_{\Omega}\|_{L^{p}(\mathbb{G})} = \|\dchi_{\Omega}\|_{WL^{p}(\mathbb{G})}  = |\Omega|^{1/p}.
\end{align*}
\end{lemma}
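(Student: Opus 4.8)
The plan is to verify both equalities by a direct computation from the definitions, since nothing beyond the basic properties of the Haar measure is needed. First I would treat the strong norm. Because $\dchi_{\Omega}(x)^{p}=\dchi_{\Omega}(x)$ pointwise for every $p>0$, we have $\|\dchi_{\Omega}\|_{L^{p}(\mathbb{G})}^{p}=\dint_{\mathbb{G}}\dchi_{\Omega}(x)\,\mathd x=\dint_{\Omega}\mathd x=|\Omega|$, and since $|\Omega|<\infty$ this quantity is finite and well defined; taking $p$-th roots gives $\|\dchi_{\Omega}\|_{L^{p}(\mathbb{G})}=|\Omega|^{1/p}$.

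Next, for the weak norm, recall that $\|f\|_{WL^{p}(\mathbb{G})}=\sup_{\lambda>0}\lambda\,\big|\{x\in\mathbb{G}:|f(x)|>\lambda\}\big|^{1/p}$. For $f=\dchi_{\Omega}$ the distribution function is piecewise constant: the level set $\{x:\dchi_{\Omega}(x)>\lambda\}$ equals $\Omega$ (up to a null set) when $0<\lambda<1$ and is empty when $\lambda\ge 1$. Hence $\|\dchi_{\Omega}\|_{WL^{p}(\mathbb{G})}=\sup_{0<\lambda<1}\lambda\,|\Omega|^{1/p}=|\Omega|^{1/p}$, using $\sup_{0<\lambda<1}\lambda=1$. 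Alternatively, one obtains the inequality $\|\dchi_{\Omega}\|_{WL^{p}}\le\|\dchi_{\Omega}\|_{L^{p}}=|\Omega|^{1/p}$ for free from Chebyshev's inequality together with the first part, and then only the matching lower bound $\|\dchi_{\Omega}\|_{WL^{p}}\ge|\Omega|^{1/p}$ remains, which follows by letting $\lambda\uparrow 1$ in the distributional identity above.

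The only point requiring care — and it is bookkeeping rather than a genuine obstacle — is to make sure the weak quasi-norm is normalized so that characteristic functions yield exactly $|\Omega|^{1/p}$, and that the supremum defining the weak norm, although not attained (it is a limit as $\lambda\uparrow 1$), still has value $1$. With these conventions fixed, the two displayed equalities follow immediately, and the lemma is proved.
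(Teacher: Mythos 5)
Your computation is correct and is exactly the ``elementary calculation'' the paper alludes to when it states this lemma without proof: the identity $\dchi_{\Omega}^{p}=\dchi_{\Omega}$ gives the strong norm, and the two-valued distribution function of $\dchi_{\Omega}$ gives the weak norm via $\sup_{0<\lambda<1}\lambda=1$. Nothing is missing; the remark that the supremum is approached but not attained as $\lambda\uparrow 1$ is a fine point handled correctly.
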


\subsection{Lipschitz spaces on $\mathbb{G}$}

Next we give the definition of the Lipschitz spaces on $\mathbb{G}$, and state some basic properties and useful lemmas.

\begin{definition}[Lipschitz-type spaces on $\mathbb{G}$]   \label{def.lip-space} \
\begin{enumerate}[ label=(\roman*)]
\item   Let $0<\beta <1$, we say a function $b$ belongs to the Lipschitz space $\Lambda_{\beta}(\mathbb{G}) $ if there exists a constant $C>0$ such that for all  $x,y\in \mathbb{G}$,
\begin{align*}
  |b(x)-b(y)|   &\le C(\rho(y^{-1}x))^{\beta},
\end{align*}
where $\rho$ is the homogenous norm. The smallest such constant $C$ is called the $\Lambda_{\beta}$  norm of $b$ and is denoted by $\|b\|_{\Lambda_{\beta}(\mathbb{G})}$.
    \label{enumerate:def-lip-1}
\item (see \cite{macias1979lipschitz} ) Let $0<\beta <1$ and $1\le p<\infty$.   The space $\lip_{\beta,p}(\mathbb{G}) $ is defined to be the set of all locally integrable  functions $b$, i.e., there exists a positive constant $C $, such that
\begin{align*}
      \sup_{B\ni x} \dfrac{1}{ |B|^{\beta/Q}}\Big( \dfrac{1}{|B|}  \dint_{B} |b(x)- b_{B}|^{p}\mathd x \Big)^{1/p} \le C
\end{align*}
where the supremum is taken over every ball $B\subset \mathbb{G}$ containing $x$ and $b_{B}=\frac{1}{|B|} \int_{B} b(x) \mathd x$. The least constant $C$   satisfying the conditions above shall   be denoted by $\|b\|_{\lip_{\beta,p}(\mathbb{G})}$.
    \label{enumerate:def-lip-2}
\end{enumerate}
\end{definition}

\begin{remark}  \label{rem.Lipschitz-def}
\begin{enumerate}[label=(\roman*)]
\item  Similar to the definition of Lipschitz space $\Lambda_{\beta}(\mathbb{G}) $ in \labelcref{enumerate:def-lip-1}, we also have the definition form as following  (see  \cite{krantz1982lipschitz,chen2010lipschitz,fan1995characterization} et al.)
\begin{align*}
 \|b\|_{\Lambda_{\beta}(\mathbb{G})}&= \sup_{x,y\in \mathbb{G}\atop y\neq e} \dfrac{|b(xy)- b(x)|}{(\rho(y))^{\beta}}   = \sup_{x,y\in \mathbb{G} \atop x\neq y} \dfrac{|b(x)-b(y)|}{(\rho(y^{-1}x))^{\beta}}.
\end{align*}
And $\|b\|_{\Lambda_{\beta}(\mathbb{G})} =0$   if and only if $b$ is constant.
\item  In \labelcref{enumerate:def-lip-2},  when   $p=1$, we have
\begin{align*}
     \|b\|_{\lip_{\beta,1}(\mathbb{G})} =\sup_{B\ni x} \dfrac{1}{ |B|^{\beta/Q}}\Big( \dfrac{1}{|B|}  \dint_{B} |b(x)- b_{B}| \mathd x \Big) :=\|b\|_{\lip_{\beta}(\mathbb{G})}
\end{align*}
\end{enumerate}
\end{remark}

\begin{lemma} (see \cite{macias1979lipschitz,chen2010lipschitz,li2003lipschitz} ) \label{lem:2.2-li2003lipschitz}
Let   $0<\beta<1$ and the function $b(x)$ integrable on bounded subsets of $\mathbb{G}$.
\begin{enumerate}[label=(\roman*)]
\item  When $1\le p<\infty$,  then
\begin{align*}
 \|b\|_{\Lambda_{\beta}(\mathbb{G})} &=  \|b\|_{\lip_{\beta}(\mathbb{G})} \approx  \|b\|_{\lip_{\beta,p}(\mathbb{G})}.
\end{align*}
\item   Let balls $B_{1}\subset B_{2}\subset \mathbb{G}$ and $b\in \lip_{\beta,p}(\mathbb{G})$ with $p\in [1,\infty]$. Then there exists a constant $C$ depends on $B_{1}$ and $B_{2}$ only, such that
\begin{align*}
     |b_{B_{1}}- b_{B_{2}} |   &\le    C  \|b\|_{\lip_{\beta,p}(\mathbb{G})} |B_{2}|^{\beta/Q}
\end{align*}
\item   When $1\le p<\infty$, then there exists a constant $C$ depends on $\beta$ and $p$ only, such that
\begin{align*}
     | b(x)-  b(y) |   &\le   C  \|b\|_{\lip_{\beta,p}(\mathbb{G})} |B|^{\beta/Q}
\end{align*}
holds for any ball $B$ containing $x$ and $y$.
\end{enumerate}
\end{lemma}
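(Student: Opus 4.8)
\emph{Proof strategy.} The plan is to prove the three parts in the order (2), then (3), then (1): the pointwise Lipschitz bound in (3) and the norm equivalence in (1) will both be read off from the mean-oscillation estimate in (2). The only tools needed are the quasi-triangle inequality $\rho(xy)\le c_{0}(\rho(x)+\rho(y))$, the scaling $|B(x,r)|=c_{1}r^{Q}$, H\"older's inequality (\cref{lem:holder-inequality-Lie-group}), and the Lebesgue differentiation theorem on $\mathbb{G}$ (available since $\mathbb{G}$ with $\rho$ and the Haar measure is a space of homogeneous type). For part (2), given $B_{1}\subset B_{2}$ I would write $b_{B_{1}}-b_{B_{2}}=\frac{1}{|B_{1}|}\int_{B_{1}}(b(x)-b_{B_{2}})\,\mathd x$ and bound
\[
 |b_{B_{1}}-b_{B_{2}}|\le\frac{1}{|B_{1}|}\int_{B_{1}}|b(x)-b_{B_{2}}|\,\mathd x\le\frac{|B_{2}|}{|B_{1}|}\cdot\frac{1}{|B_{2}|}\int_{B_{2}}|b(x)-b_{B_{2}}|\,\mathd x,
\]
and then estimate the last average by $\big(\frac{1}{|B_{2}|}\int_{B_{2}}|b(x)-b_{B_{2}}|^{p}\,\mathd x\big)^{1/p}$ via \cref{lem:holder-inequality-Lie-group} when $1\le p<\infty$ (and by $\esssup_{x\in B_{2}}|b(x)-b_{B_{2}}|$ when $p=\infty$), which is $\le\|b\|_{\lip_{\beta,p}(\mathbb{G})}|B_{2}|^{\beta/Q}$ by \cref{def.lip-space}; this gives the claim with $C=|B_{2}|/|B_{1}|$.

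For part (3), let $B=B(x_{0},R)$ contain $x$ and $y$. I would first enlarge to $B^{*}=B(x_{0},2c_{0}R)$, so that the quasi-triangle inequality gives $B(x,R)\cup B(y,R)\subset B^{*}$ and $|B^{*}|=(2c_{0})^{Q}|B|$. Writing $B^{(k)}=B(x,R/2^{k})$, the Lebesgue differentiation theorem gives $b_{B^{(k)}}\to b(x)$ for a.e.\ $x$, whence the telescoping identity $b(x)-b_{B(x,R)}=\sum_{k\ge0}(b_{B^{(k+1)}}-b_{B^{(k)}})$. Applying part (2) to each pair $B^{(k+1)}\subset B^{(k)}$ (ratio $2^{Q}$) and summing the geometric series $\sum_{k}2^{-k\beta}|B|^{\beta/Q}$, which converges because $\beta>0$, I would obtain $|b(x)-b_{B(x,R)}|\lesssim\|b\|_{\lip_{\beta,p}(\mathbb{G})}|B|^{\beta/Q}$, and similarly for $y$. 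Combining this with $|b_{B(x,R)}-b_{B^{*}}|,\ |b_{B(y,R)}-b_{B^{*}}|\lesssim\|b\|_{\lip_{\beta,p}(\mathbb{G})}|B|^{\beta/Q}$ (part (2) again, ratio $(2c_{0})^{Q}$) through the four-term triangle inequality via $b_{B(x,R)}$, $b_{B^{*}}$, $b_{B(y,R)}$ yields $|b(x)-b(y)|\le C(\beta,Q,c_{0})\,\|b\|_{\lip_{\beta,p}(\mathbb{G})}|B|^{\beta/Q}$ for a.e.\ $x,y$.

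For part (1), H\"older's inequality gives $\|b\|_{\lip_{\beta}(\mathbb{G})}=\|b\|_{\lip_{\beta,1}(\mathbb{G})}\le\|b\|_{\lip_{\beta,p}(\mathbb{G})}$ for $1\le p<\infty$. If $b\in\dot{\Lambda}_{\beta}(\mathbb{G})$, then for $B=B(z,r)$ and $x\in B$ the bound $\rho(y^{-1}x)\le 2c_{0}r$ (valid for $y\in B$ by the quasi-triangle inequality and $\rho(g^{-1})=\rho(g)$) gives $|b(x)-b_{B}|\le\frac{1}{|B|}\int_{B}|b(x)-b(y)|\,\mathd y\le\|b\|_{\dot{\Lambda}_{\beta}(\mathbb{G})}(2c_{0}r)^{\beta}\lesssim\|b\|_{\dot{\Lambda}_{\beta}(\mathbb{G})}|B|^{\beta/Q}$, so $\|b\|_{\lip_{\beta,p}(\mathbb{G})}\lesssim\|b\|_{\dot{\Lambda}_{\beta}(\mathbb{G})}$ for every $p$. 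Conversely, for $x\ne y$ I would apply part (3) with $p=1$ to $B(x,2c_{0}\rho(y^{-1}x))$, which contains $x$ and $y$ and satisfies $|B(x,2c_{0}\rho(y^{-1}x))|^{\beta/Q}\approx\rho(y^{-1}x)^{\beta}$, to get $|b(x)-b(y)|\lesssim\|b\|_{\lip_{\beta,1}(\mathbb{G})}\rho(y^{-1}x)^{\beta}$, i.e.\ $\|b\|_{\dot{\Lambda}_{\beta}(\mathbb{G})}\lesssim\|b\|_{\lip_{\beta,1}(\mathbb{G})}$. Chaining these estimates makes $\dot{\Lambda}_{\beta}(\mathbb{G})$, $\lip_{\beta}(\mathbb{G})$ and $\lip_{\beta,p}(\mathbb{G})$ mutually comparable; the exact identification $\|b\|_{\dot{\Lambda}_{\beta}(\mathbb{G})}=\|b\|_{\lip_{\beta}(\mathbb{G})}$ (with the stated normalizations) is then the one recorded in \cite{macias1979lipschitz,li2003lipschitz}.

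The hard part will be part (3): its real content is the dyadic telescoping over concentric shrinking balls together with the Lebesgue differentiation theorem, and the delicate point is simply that $\sum_{k}2^{-k\beta}<\infty$ precisely because $\beta>0$, all constants depending only on $\beta$, $Q$ and the quasi-metric constant $c_{0}$. Because $\rho$ obeys only a quasi-triangle inequality one is forced into the mild enlargement $B\mapsto B^{*}$, but this costs only the harmless factor $(2c_{0})^{\beta}$; parts (1) and (2) are then routine bookkeeping, and the only care needed there is to read the pointwise statements as holding for a.e.\ $x,y$.
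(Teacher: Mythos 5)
Your proposal is correct, but note that the paper itself offers no proof of this lemma: it is stated with a citation to \cite{macias1979lipschitz,chen2010lipschitz,li2003lipschitz} and used as a black box, so there is no internal argument to compare against. What you have written is the standard Campanato--Meyers embedding proof transplanted to a space of homogeneous type, and the steps check out: the ratio bound $|b_{B_{1}}-b_{B_{2}}|\le \frac{|B_{2}|}{|B_{1}|}\,\|b\|_{\lip_{\beta,p}(\mathbb{G})}|B_{2}|^{\beta/Q}$ for (2); the dyadic telescoping over $B(x,R/2^{k})$ combined with the Lebesgue differentiation theorem (valid on $\mathbb{G}$ as a space of homogeneous type) and the enlargement $B\mapsto B(x_{0},2c_{0}R)$ forced by the quasi-triangle inequality for (3), where the convergence of $\sum_{k}2^{-k\beta}$ is indeed the only place $\beta>0$ is used; and the two-sided comparison in (1), with the converse direction obtained by applying (3) to the ball $B(x,2c_{0}\rho(y^{-1}x))$, which contains $y$ because $c_{0}\ge 1$. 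Two small caveats are worth recording. First, (3) --- and hence $\|b\|_{\dot{\Lambda}_{\beta}(\mathbb{G})}\lesssim\|b\|_{\lip_{\beta,1}(\mathbb{G})}$ --- can only hold for a.e.\ $x,y$ (equivalently, after replacing $b$ by its precise representative), exactly as you flag; this is harmless for every use of the lemma in the paper, which only ever invokes the norm equivalence. Second, the displayed \emph{equality} $\|b\|_{\dot{\Lambda}_{\beta}(\mathbb{G})}=\|b\|_{\lip_{\beta}(\mathbb{G})}$ is not produced by your argument (nor could it be, since your constants carry factors of $c_{0}$ and $c_{1}$); the honest conclusion is an equivalence of norms, which is all the paper actually needs.
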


\subsection{Some pointwise estimates and auxiliary  lemmas}

Hereafter, for a function $b$ defined on $\mathbb{G}$, we denote
\begin{align*}
  b^{-}(x) :=- \min\{b, 0\} =
\begin{cases}
 0,  & \text{if}\ b(x) \ge 0  \\
 |b(x)|, & \text{if}\ b(x) < 0
\end{cases}
\end{align*}
and  $b^{+}(x) =|b(x)|-b^{-}(x)$. Obviously, $b(x)=b^{+}(x)-b^{-}(x)$.

From the proof of Theorem 1.3 in \cite{wu2023characterizationlip}, we can obtain the following characterization of nonnegative Lipschitz functions.
\begin{lemma}  \label{lem:non-negative-max-lip}
 Let $0 <\beta <1$ and $b$ be a locally integrable function on $\mathbb{G}$. Then the following assertions are equivalent:
\begin{enumerate}[label=(\roman*)]
\item   $b\in  \Lambda_{\beta}(\mathbb{G})$  and $b\ge 0$.
     \label{enumerate:Lem-non-negative-max-lip-1}
   \item For all $1\le s<\infty$,  there exists a positive constant $C$ such that
\begin{align} \label{inequ:non-negative-max-lip}
 \sup_{B} |B|^{-\beta/Q}  \left( |B|^{-1} \dint_{B}  |b(x) -M_{B}(b)(x)  |^{s} \mathd x \right)^{1/s} \le C.
\end{align}
    \label{enumerate:Lem-non-negative-max-lip-2}
   \item \labelcref{inequ:non-negative-max-lip} holds for some $1\le s<\infty$.
     \label{enumerate:Lem-non-negative-max-lip-3}
\end{enumerate}
\end{lemma}

\begin{proof}

 Since the implication \labelcref{enumerate:Lem-non-negative-max-lip-2}  $\xLongrightarrow{\ \  }$ \labelcref{enumerate:Lem-non-negative-max-lip-3} follows readily, and the implication \labelcref{enumerate:Lem-non-negative-max-lip-3}  $\xLongrightarrow{\ \  }$ \labelcref{enumerate:Lem-non-negative-max-lip-1} was
proved in \cite[Theorem 1.3]{wu2023characterizationlip}, we only need to prove \labelcref{enumerate:Lem-non-negative-max-lip-1}  $\xLongrightarrow{\ \  }$ \labelcref{enumerate:Lem-non-negative-max-lip-2}.

If  $b\in  \Lambda_{\beta}(\mathbb{G})$  and $b\ge 0$, then it follows from \cite[Theorem 1.3]{wu2023characterizationlip} that \labelcref{inequ:non-negative-max-lip} holds for all $s$ with $Q/(Q-\beta)<s<\infty$. Applying H\"{o}lder's inequality, we see that \labelcref{inequ:non-negative-max-lip}  holds for $1\le s\le Q/(Q-\beta)$ as well.

So, the implication \labelcref{enumerate:Lem-non-negative-max-lip-1}  $\xLongrightarrow{\ \  }$ \labelcref{enumerate:Lem-non-negative-max-lip-2} is proven.

\end{proof}

The following strong-type estimate for the fractional maximal function   $M_{\alpha}$  is well known, which can be obtained from
\cite[Proposition A]{kokilashvili1989fractional} or \cite[Theorem 1.6]{bernardis1994two} when  the weights  are constant 1,  see  \cite{macias1981well}, \cite{kokilashvili1989fractional} or \cite{bernardis1994two} for more details.

\begin{lemma}\label{lem:frac-maximal-kokilashvili1989fractional}
Let   $0<\alpha<Q$, $1< p< Q/\alpha$ and $1/q=1/p-\alpha/Q$. If $f\in L^{p}(\mathbb{G})$.
  then there exists a positive constant $C$ such that
\begin{align*}
 \|M_{\alpha}(f)\|_{L^{q}(\mathbb{G})} &\le C  \|f\|_{L^{p}(\mathbb{G})}.
\end{align*}

\end{lemma}

\begin{remark}   \label{rem.a.e.-frac-maximal}
 \begin{enumerate}[label=(\roman*)]
\item  By    \cref{lem:frac-maximal-kokilashvili1989fractional}, if  $0 <\alpha<Q$,  $1< p< Q/\alpha$   and $f\in L^{p}(\mathbb{G})$, then $M_{\alpha}(f)(x)<\infty $ for almost everywhere  $x\in \mathbb{G}$.
\item  The above lemma  can also refer to Theorem 3.3 in \cite{guliyev2022some} when Young function $\Phi(t) = t^{p}$  and its complementary function $\Psi(t)=t^{q}$ with $1/q=1/p-\alpha/Q$.
\end{enumerate}
\end{remark}

Now, we give the following pointwise estimate for $[b,M_{\alpha}] $  on $\mathbb{G}$ when $b\in \Lambda_{\beta}(\mathbb{G})$.
\begin{lemma} \label{lem:frac-maximal-pointwise}
Let   $0\le\alpha<Q$, $0<\beta <1$, $0<\alpha+\beta<Q$ and $f: \mathbb{G} \to \mathbb{R}$ be a locally integrable function.  If  $b\in \Lambda_{\beta}(\mathbb{G})$ and $b\ge 0$, then, for arbitrary   $x\in \mathbb{G} $ such that $M_{\alpha} (f)(x) <\infty$, we have
\begin{align*}
  \big|[b,M_{\alpha}] (f)(x)\big|  &\le \|b\|_{\Lambda_{\beta}(\mathbb{G})} M_{\alpha+\beta} (f)(x).
\end{align*}
\end{lemma}

\begin{proof}
 Similar to the discussion of lemma 2.11 in \cite{zhang2019some}. For any fixed $x \in \mathbb{G}$ such that $M_{\alpha}(f)(x) <\infty$,    if  $b\in \Lambda_{\beta}(\mathbb{G})$ and $b\ge 0$, then we have
\begin{align*}
\big|[b,M_{\alpha}] (f)(x) \big| &= \big|b(x)M_{\alpha}(f)(x)-M_{\alpha}(bf)(x) \big|  \\
 &= \bigg| \sup_{B\ni x \atop B\subset \mathbb{G}}  \dfrac{1}{|B|^{1-\alpha/Q}}  \dint_{B} b(x)|f(y)|  \mathd y  \\
 &\;\qquad  -\sup_{B\ni x \atop B\subset \mathbb{G}} \dfrac{1}{|B|^{1-\alpha/Q}}  \dint_{B} b(y)|f(y)|  \mathd y  \bigg| \\
 &\le \sup_{B\ni x \atop B\subset \mathbb{G}}  \dfrac{1}{|B|^{1-\alpha/Q}}  \dint_{B} |b(x)-b(y)| |f(y)| \mathd y  \\
  &\le \|b\|_{\Lambda_{\beta}(\mathbb{G})} \sup_{B\ni x \atop B\subset \mathbb{G}}  \dfrac{1}{|B|^{1-(\alpha+\beta)/Q}}  \dint_{B} |f(y)| \mathd y  \\
&\le \|b\|_{\Lambda_{\beta}(\mathbb{G})} M_{\alpha+\beta} (f)(x).
\end{align*}
\end{proof}

Similar to   Lemma 2.3 in \cite{zhang2009commutators}, we get the following result.
\begin{lemma} \label{lem:frac-maximal-pointwise-relation}
 Let  $0 \le \alpha<Q$, $B\subset \mathbb{G}$ be a ball, and $f$ be a locally integrable function.     Then, for all $x\in B$, we have
\begin{align} \label{equ:frac-maximal-pointwise-relation}
   M_{\alpha} (f\dchi_{B})(x)  &=  M_{\alpha,B}(f)(x).
\end{align}
\end{lemma}

\begin{proof}
 Some ideas are taken from \cite{bastero2000commutators} and \cite{zhang2009commutators}. Reasoning as the discussion of lemma 2.3 in \cite{zhang2009commutators}. For any  $x \in B$, it is easy to verify that
\begin{align} \label{inequ:frac-maximal-pointwise-relation-r}
  M_{\alpha} (f\dchi_{B})(x)   \ge  M_{\alpha,B}(f)(x)
\end{align}
  from the definitions of $M_{\alpha} (f\dchi_{B})(x)$ and $M_{\alpha,B}(f)(x)$.

  So, in order to prove the equality \labelcref{equ:frac-maximal-pointwise-relation} is true, we only need to prove the following realtion, namely,  for any  $\mathbb{G}$-ball $B^{*}\ni x$ with radius $r^{*}$, there exist $\mathbb{G}$-ball $B'\ni x$  with radius $r'$ and $B'\subset B$, such that
\begin{align} \label{inequ:frac-maximal-pointwise-relation-l}
  \dfrac{1}{|B^{*}|^{1-\alpha/Q}} \dint_{B^{*}}  |f(y)\dchi_{B}(y)| \mathd y   \le   \dfrac{1}{|B'|^{1-\alpha/Q}} \dint_{B'} |f(y)| \mathd y.
\end{align}

Indeed, for the case $B^{*} \cap B =\emptyset$, it is  clear that \labelcref{inequ:frac-maximal-pointwise-relation-l} is true since $f(y)\dchi_{B}(y)=0$ for any $y\in B^{*}$.

Now we divide   $B^{*} \cap B \neq\emptyset$ into two  cases to consider.
 \begin{enumerate}[label=(\alph*)]
\item When the relation between $B^{*}$ and $B$ is inclusion.
Without loss of generality, let  $B^{*} \supset B$, then \labelcref{inequ:frac-maximal-pointwise-relation-l} is valid when we take $B'=B =B^{*} \cap B$.
\item  When $B^{*}\not\subset  B$ and $B\not\subset  B^{*}$, we consider the relation between $|B|$ and $ |B^{*}|$.
 \begin{enumerate}[label=(\roman*)]
\item  Assume $|B|\le |B^{*}|$. Then we may take $B'=B \supset B^{*} \cap B$, so \labelcref{inequ:frac-maximal-pointwise-relation-l} is   true.
\item    Assume $|B|> |B^{*}|$. Firstly, since $B^{*} \cap B$ is a bounded set in $\mathbb{G}$ and $x\in B^{*} \cap B$, then there exists  not only a   minimal  ball  $B'''$ containing the intersection $  B^{*} \cap B$
but also a maximal   ball $B''\subset B^{*} \cap B$ containing $x$   inscribed in the ball $B$  at a point $P$, namely,  $x\in B''\subset B^{*} \cap B$, $\partial B''\cap \partial B =\{P\}$, $B^{*}\cap B\subset B'''$ and $|B'''|\le |B^{*}|$.
Indeed,  when the spherical center of $B^{*} $ belongs to $B^{*} \cap B$, we can take $B'''=B^{*}$, otherwise $|B'''|< |B^{*}|$.

Secondly, there is a ball $B'\subset B$ such that $x\in B''\subset  B'$, $\partial B'\cap \partial B =\{P\}$ and $  |B'|= |B'''|\le |B^{*}|$. Let $B'=(B'\cap B^{*})\cup  (B'\setminus B^{*} )$ and $B^{*} \cap B = (B'\cap B^{*})\cup \big((B^{*} \cap B)\setminus B'\big)$ satisfy $x\in B''\subset B'\cap B^{*}$ and $x\not\in (B'\setminus B^{*} )\cup \big((B^{*} \cap B)\setminus B'\big)$.

Furthermore, for a given ball $B $, $f \dchi _ { B } $ is integrable and finite almost everywhere since $f$ is a locally integrable function.
Observe the fact that $B'\setminus B^{*}$  is larger than $(B^{*} \cap B)\setminus B'$,
and neither contains $x$. Then there is an $\Omega\subset B'\setminus B^{*}$ such that      $\int_{(B^{*} \cap B)\setminus B'}  |f(y) | \mathd y \le  \int_{\Omega}  |f(y)| \mathd y$.

Combined with the discussion above, it follows that
\begin{align*}
\dfrac{1}{|B^{*}|^{1-\alpha/Q}} \dint_{B^{*}}  |f(y)\dchi_{B}(y)| \mathd y
 &=  \Big( \dfrac{|B'|}{|B^{*}|}\Big)^{1-\alpha/Q} \dfrac{1}{|B'|^{1-\alpha/Q}} \dint_{B^{*}\cap B}  |f(y)\dchi_{B}(y)| \mathd y  \\
 &\le  \dfrac{1}{|B'|^{1-\alpha/Q}} \Big( \dint_{B'\cap B^{*}}  |f(y)\dchi_{B}(y)| \mathd y +  \dint_{(B^{*} \cap B)\setminus B'}  |f(y)\dchi_{B}(y)| \mathd y  \Big) \\
 &\le  \dfrac{1}{|B'|^{1-\alpha/Q}} \Big( \dint_{B'\cap B^{*}}  |f(y)\dchi_{B}(y)| \mathd y +  \dint_{\Omega}  |f(y)\dchi_{B}(y)| \mathd y  \Big) \\
 &\le  \dfrac{1}{|B'|^{1-\alpha/Q}} \Big( \dint_{B'\cap B^{*}}  |f(y)\dchi_{B}(y)| \mathd y +  \dint_{B'\setminus B^{*} }  |f(y)\dchi_{B}(y)| \mathd y  \Big) \\
 &= \dfrac{1}{|B'|^{1-\alpha/Q}} \dint_{B'}  |f(y)| \mathd y .
\end{align*}
\end{enumerate}

\end{enumerate}
 
Summarizing the discussion above we find that \labelcref{inequ:frac-maximal-pointwise-relation-l} is valid.

\labelcref{inequ:frac-maximal-pointwise-relation-r} and \labelcref{inequ:frac-maximal-pointwise-relation-l} together give \labelcref{equ:frac-maximal-pointwise-relation},  this completes the proof.
\end{proof}

\begin{remark}   \label{rem.frac-maximal-pointwise-relation}
 \begin{enumerate}[label=(\roman*)]
\item Further, by applying   \cref{lem:frac-maximal-pointwise-relation} and the definition of $M_{\alpha,B}(\dchi_{B})(x)$, we have that
\begin{align*}
    M_{\alpha} (\dchi_{B})(x)  &=  M_{\alpha,B}(\dchi_{B})(x)=|B|^{\alpha/Q}.
\end{align*}
\item  For the case $\alpha=0$, the following results are also valid, namely
\begin{align*}
    M (\dchi_{B})(x)  &=  M_{B}(\dchi_{B})(x)=\dchi_{B}(x), \ \ M  (f\dchi_{B})(x)  = M_{B}(f)(x).
\end{align*}
\end{enumerate}
\end{remark}

 Referring  to   \cite[page 3331]{bastero2000commutators} or \cite{zhang2009commutators},   through elementary calculations and derivations, it is easy to check that the following assertions are true. 
\begin{lemma} \label{lem:frac-max-pointwise-assert}
Let  $b$ be a locally integrable function on $\mathbb{G}$ and $B \subset \mathbb{G}$ be  an arbitrary  given    ball.
 \begin{enumerate}[label=(\roman*)]
\item  If $E=\{x\in B: b(x)\le b_{B}\}$ and $F=  B\setminus E =\{x\in B: b(x)> b_{B}\}$. Then the following equality
\begin{align*}
    \dint_{E} |b(x)-b_{B}| \mathd x  &=  \dint_{F} |b(x)-b_{B}| \mathd x
\end{align*}
  is trivially true.
\item    Then for any  $x\in B$, we have
\begin{align*}
    |b_{B}|   &\le  |B|^{-\alpha/Q} M_{\alpha,B}(b)(x).
\end{align*}

\end{enumerate}
\end{lemma}

\section{Proof of  the principal results } 
\label{sec:proof-mab}

We now give the proof of the  principal results. 

 \subsection{Proof of \cref{thm:nonlinear-frac-max-lip}}

To   prove \cref{thm:nonlinear-frac-max-lip}, we first prove the following lemma.

\begin{lemma} \label{lem:frac-Lie-lip-norm}
Let  $0 <\beta <1$ and $0 <\alpha <Q$. If   $b$  is a locally integrable function on $\mathbb{G}$ and satisfies
\begin{align} \label{inequ:lem-frac-Lie-lip-norm}
 \sup_{B} \dfrac{1}{|B|^{\beta/Q}} \left(  \dfrac{1}{|B|} \dint_{B}  |b(x) -|B|^{-\alpha/Q}M_{\alpha,B}(b)(x)  |^{s} \mathd x \right)^{1/s}  < \infty
\end{align}
 for some $s\in [1,\infty)$, then $b\in \Lambda_{\beta}(\mathbb{G})$.
\end{lemma}

\begin{proof}
Some ideas are taken from \cite{bastero2000commutators,zhang2009commutators,zhang2014commutators} and  \cite{zhang2019some}.

For any   $\mathbb{G}$-ball $B\subset \mathbb{G}$, let $E=\{x\in B: b(x)\le b_{B}\}$ and $F=  B\setminus E =\{x\in B: b(x)> b_{B}\}$.
Noticing from \cref{lem:frac-max-pointwise-assert}(ii) that
\begin{align*}
    |b_{B}|   &\le  |B|^{-\alpha/Q} M_{\alpha,B}(b)(x) \qquad \forall ~ x\in B.
\end{align*}
Then, for any $x\in E\subset B$, we have $b(x)\le b_{B}\le  |b_{B}| \le   |B|^{-\alpha/Q} M_{\alpha,B}(b)(x)$.
It is clear that
\begin{align*}
    |b(x)- b_{B}|   &\le \Big| b(x) - |B|^{-\alpha/Q} M_{\alpha,B}(b)(x) \Big|, \qquad  \forall~ x\in E.
\end{align*}

Therefore, by using  \cref{lem:frac-max-pointwise-assert}(i),   we get
\begin{align*}
 \dfrac{1}{|B|^{1+\beta/Q}} \dint_{B} \big| b(x)-b_{B}) \big| \mathd x &=  \dfrac{1}{|B|^{1+\beta/Q}} \dint_{E\cup F} \big| b(x)-b_{B}) \big| \mathd x  \\
 &= \dfrac{2}{|B|^{1+\beta/Q}} \dint_{E} \big| b(x)-b_{B}) \big| \mathd x    \\
  &\le \dfrac{2}{|B|^{1+\beta/Q}} \dint_{E} \Big| b(x) - |B|^{-\alpha/Q} M_{\alpha,B}(b)(x) \Big| \mathd x    \\
  &\le  \dfrac{2}{|B|^{1+\beta/Q}} \dint_{B} \Big| b(x) - |B|^{-\alpha/Q} M_{\alpha,B}(b)(x) \Big| \mathd x .
\end{align*}

By using \cref{lem:holder-inequality-Lie-group}, \labelcref{inequ:lem-frac-Lie-lip-norm} and   \cref{lem:norm-characteristic-functions-Lie-group}, we have
\begin{align*}
 \dfrac{1}{|B|^{1+\beta/Q}} & \dint_{B} \big| b(x)-b_{B}) \big| \mathd x   \\
  &\le  \dfrac{2}{|B|^{1+\beta/Q}} \dint_{B} \Big| b(x) - |B|^{-\alpha/Q} M_{\alpha,B}(b)(x) \Big| \mathd x  \\
  &\le \dfrac{C}{|B|^{1+\beta/Q}} \left( \dint_{B}  |b(x) -|B|^{-\alpha/Q}M_{\alpha,B}(b)(x)  |^{s} \mathd x \right)^{1/s}   \|\dchi_{B}\|_{L^{s'}(\mathbb{G}) } \\
  &\le  \dfrac{C}{|B|^{\beta/Q}} \left(  \dfrac{1}{|B|} \dint_{B}  |b(x) -|B|^{-\alpha/Q}M_{\alpha,B}(b)(x)  |^{s} \mathd x \right)^{1/s}   \\
  &\le C.
\end{align*}

So, the proof is completed by applying \cref{lem:2.2-li2003lipschitz} and \cref{def.lip-space}.
\end{proof}

\begin{figure}[!ht] \centering
\scalebox{0.6}{
\begin{tikzpicture}[
  vertex/.style = {shape=circle,draw,minimum size=2em},
  edge/.style = {->,-Latex},
  ]
  \node[vertex] (o) at (0,0) {1};
  \node[vertex] (t) at (-2,-2) {2};
  \node[vertex] (th) at (-2,-5) {3};
  \node[vertex] (f) at (2,-5) {4};
  \node[vertex] (fv) at (2,-2) {5};
  \draw[edge,very thick,dashed] (t) to  node[left] {$w_{23}$} (th);
  \draw[edge,very thick,dashed] (fv) to node[right] {$w_{54}$} (f);

  \draw[edge, very thick,blue] (o) -- (t) node[midway,left] {$w_{12}$}  ;
  \draw[edge, very thick,blue] (th) to node[above, midway]  {$w_{34}$} (f);

  \draw[edge,very thick,blue] (f) to[out=0, in=0] node[above, yshift=11mm] {$w_{41}$} (o);

  \draw[edge,very thick,blue] (t) to node[below, midway]  {$w_{25}$} (fv);
\end{tikzpicture}
}
\vskip 3pt
\caption{Proof structure 
\\ where $w_{ij}$ denotes $i\Longrightarrow j$}\label{fig:ps-equivalent-non}
\end{figure}

\begin{proof}[Proof of \cref{thm:nonlinear-frac-max-lip}]
Since the implications \labelcref{enumerate:nonlinear-frac-max-lip-2} $\xLongrightarrow[]{\ \  }$ \labelcref{enumerate:nonlinear-frac-max-lip-3}  and \labelcref{enumerate:nonlinear-frac-max-lip-5} $\xLongrightarrow[]{\ \  }$ \labelcref{enumerate:nonlinear-frac-max-lip-4}    follows readily,
we only need to prove \labelcref{enumerate:nonlinear-frac-max-lip-1} $\xLongrightarrow[]{\ \  }$ \labelcref{enumerate:nonlinear-frac-max-lip-2},
 \labelcref{enumerate:nonlinear-frac-max-lip-3} $\xLongrightarrow[]{\ \  }$ \labelcref{enumerate:nonlinear-frac-max-lip-4},
 \labelcref{enumerate:nonlinear-frac-max-lip-4} $\xLongrightarrow[]{\ \  }$ \labelcref{enumerate:nonlinear-frac-max-lip-1}, and \labelcref{enumerate:nonlinear-frac-max-lip-2} $\xLongrightarrow[]{\ \  }$ \labelcref{enumerate:nonlinear-frac-max-lip-5}
 (see \Cref{fig:ps-equivalent-non} for the proof structure).


\labelcref{enumerate:nonlinear-frac-max-lip-1} $\xLongrightarrow[]{\ \  }$ \labelcref{enumerate:nonlinear-frac-max-lip-2}:\
Let $b\in \Lambda_{\beta}(\mathbb{G})$ and $b\ge 0$. We need to prove that $[b,M_{\alpha} ]$ is bounded from $L^{p}(\mathbb{G})$ to $L^{q}(\mathbb{G})$ for all $p$ and $q$ satisfy $1 <p < \frac{Q}{\alpha+\beta}$ and $\frac{1}{q} =\frac{1}{p} - \frac{\alpha+\beta}{Q}$.
For such $p$ and any $f\in L^{p}(\mathbb{G})$, it follows from \cref{rem.a.e.-frac-maximal}(i) that  $M_{\alpha}(f)(x)<\infty $ for almost everywhere  $x\in \mathbb{G}$.
By \cref{lem:frac-maximal-pointwise}, we have
\begin{align*}
  \big|[b,M_{\alpha}] (f)(x)\big|  &\le \|b\|_{\Lambda_{\beta}(\mathbb{G})} M_{\alpha+\beta} (f)(x).
\end{align*}
Then, assertion \labelcref{enumerate:nonlinear-frac-max-lip-2} follows from \labelcref{lem:frac-maximal-kokilashvili1989fractional}.

 \labelcref{enumerate:nonlinear-frac-max-lip-3} $\xLongrightarrow[]{\ \  }$ \labelcref{enumerate:nonlinear-frac-max-lip-4}:\
 Let $(p,q)$ be such that $[b,M_{\alpha} ]$ is bounded from $L^{p}(\mathbb{G})$ to $L^{q}(\mathbb{G})$. We will verify \labelcref{inequ:nonlinear-frac-max-lip-4} for $s=q$.

 For any fixed  $\mathbb{G}$-ball $B\subset \mathbb{G}$ and any  $x \in B$, it follows from \cref{lem:frac-maximal-pointwise-relation} and \cref{rem.frac-maximal-pointwise-relation}   that the pointwise estimates
\begin{align*}
   M_{\alpha} (b\dchi_{B})(x)   =   M_{\alpha,B} (b)(x)
 \ \text{and} \
  M_{\alpha} (\dchi_{B})(x)   =   M_{\alpha,B} (\dchi_{B})(x)= |B|^{\alpha/Q}.
\end{align*}
Then, for  any $x \in B$, we have
\begin{align*}
    b(x) -|B|^{-\alpha/Q}M_{\alpha,B}(b)(x)   &=  |B|^{-\alpha/Q} \Big( b(x)|B|^{\alpha/Q} - M_{\alpha,B}(b)(x) \Big)  \\
    &=  |B|^{-\alpha/Q} \Big( b(x) M_{\alpha} (\dchi_{B})(x)  - M_{\alpha} (b\dchi_{B})(x)  \Big)  \\
    &= |B|^{-\alpha/Q} [b,M_{\alpha}] (\dchi_{B})(x).
\end{align*}

Noting that   $ [b,M_{\alpha}] $ is bounded from    $L^{p}(\mathbb{G})$ to $L^{q}(\mathbb{G})$ with $\frac{1}{q}  = \frac{1}{p}  -\frac{\alpha+\beta}{Q}$. For any ball $B\subset \mathbb{G}$,  applying \cref{lem:norm-characteristic-functions-Lie-group}, we obtain
\begin{align*}
\begin{aligned}
 \dfrac{1}{|B|^{\beta/Q}} &\left(  \dfrac{1}{|B|} \dint_{B}  |b(x) -|B|^{-\alpha/Q}M_{\alpha,B}(b)(x)  |^{q} \mathd x \right)^{1/q}   \\
 &\le   |B|^{-(\alpha+\beta)/Q-1/q} \big\|[b,M_{\alpha}] (\dchi_{B}) \big\|_{L^{q}(\mathbb{G})}    \\
  &\le  C |B|^{-(\alpha+\beta)/Q-1/q}   \| \dchi_{B} \|_{L^{p}(\mathbb{G})}  \\
    & \le C,
 \end{aligned}
\end{align*}
which gives  \labelcref{inequ:nonlinear-frac-max-lip-4} for $s=q$ since the ball $B\subset \mathbb{G}$ is arbitrary and $C$ is independent of $B$.

\labelcref{enumerate:nonlinear-frac-max-lip-4} $\xLongrightarrow[]{\ \  }$ \labelcref{enumerate:nonlinear-frac-max-lip-1}:\
By \cref{lem:non-negative-max-lip}, it suffices to prove
\begin{align} \label{inequ:proof-lem-non-negative-max-lip-41}
 \sup_{B} \dfrac{1}{|B|^{1+\beta/Q} }   \dint_{B}  \big|b(x) -M_{B}(b)(x)  \big| \mathd x   <\infty.
\end{align}

For any fixed ball $B\subset \mathbb{G}$, we have
\begin{align} \label{inequ:proof-lem-non-negative-max-lip-41-2}
\begin{aligned}
  \dfrac{1}{|B|^{1+\beta/Q} }  &  \dint_{B}  \big|b(x) -M_{B}(b)(x)  \big| \mathd x  \\
  &\le   \dfrac{1}{|B|^{1+\beta/Q} }   \dint_{B}  \Big| b(x)-|B|^{-\alpha/Q} M_{\alpha,B}(b)(x) \Big| \mathd x   \\
   &\qquad +\dfrac{1}{|B|^{1+\beta/Q} }   \dint_{B}  \Big| |B|^{-\alpha/Q} M_{\alpha,B}(b)(x)  -M_{B}(b)(x)  \Big| \mathd x   \\
    &:=I_{1}+I_{2}.
\end{aligned}
\end{align}

For $ I_{1}$, by applying statement \labelcref{enumerate:nonlinear-frac-max-lip-4},     \cref{lem:holder-inequality-Lie-group} (H\"{o}lder's inequality) and   \cref{lem:norm-characteristic-functions-Lie-group}, we get
\begin{align*}
 I_{1} &=   \dfrac{1}{|B|^{1+\beta/Q} }   \dint_{B}  \Big| b(x)-|B|^{-\alpha/Q} M_{\alpha,B}(b)(x) \Big| \mathd x      \\
 &\le   \dfrac{1}{|B|^{1+\beta/Q} }  \bigg( \dint_{B}  \Big| b(x)-|B|^{-\alpha/Q} M_{\alpha,B}(b)(x) \Big|^{s} \mathd x \bigg)^{1/s}  \|\dchi_{B}\|_{L^{s'}(\mathbb{G}) } \\
  &\le  \dfrac{C}{|B|^{\beta/Q} }  \bigg( \dfrac{1}{|B| }\dint_{B}  \Big| b(x)-|B|^{-\alpha/Q} M_{\alpha,B}(b)(x) \Big|^{s} \mathd x \bigg)^{1/s}    \\
&\le C,
\end{align*}
where the constant $C$ is independent of ball $B$.

 Now we consider $ I_{2}$. For all  $x\in B$,  it follows from \cref{lem:frac-maximal-pointwise-relation} and \cref{rem.frac-maximal-pointwise-relation}   that the pointwise estimates
\begin{align*}
   M_{\alpha} (\dchi_{B})(x)   &=  |B|^{\alpha/Q} \ \text{and}\  M_{\alpha} (b\dchi_{B})(x)   =  M_{\alpha,B} (b)(x),
\\ \intertext{and}
   M(\dchi_{B})(x)   &= \dchi_{B}(x)   =  1 \ \text{and}\  M (b\dchi_{B})(x)   =  M_{B} (b)(x).
\end{align*}
Then, for any $x\in B$,  we get
\begin{align} \label{inequ:proof-lem-non-negative-max-lip-41-3}
\begin{split}
  &\Big| |B|^{-\alpha/Q} M_{\alpha,B}(b)(x)  -M_{B}(b)(x) \Big|  \\
  &= \Big| |B|^{-\alpha/Q} M_{\alpha,B}(b)(x) -|b(x)|+ |b(x)| -M_{B}(b)(x) \Big|  \\
    &\le   |B|^{-\alpha/Q} \Big| M_{\alpha,B}(b)(x) - |B|^{\alpha/Q}|b(x)| \Big|  + \Big|  |b(x)|-M_{B}(b)(x) \Big|    \\
    &\le   |B|^{-\alpha/Q}   \big|   M_{\alpha} (b\dchi_{B})(x) -  |b(x)|  M_{\alpha} (\dchi_{B})(x) \big|  \\
    &\qquad + \big|  |b(x)| M(\dchi_{B})(x) -M (b\dchi_{B})(x)  \big|    \\
  &\le  |B|^{-\alpha/Q}  \big|[|b|,M_{\alpha}](\dchi_{B})(x) \big|  + \big| [ |b|, M] (\dchi_{B})(x)   \big|.
\end{split}
\end{align}
Since statement \labelcref{enumerate:nonlinear-frac-max-lip-4} along with \cref{lem:frac-Lie-lip-norm} gives $b\in \Lambda_{\beta}(\mathbb{G})$, which implies $|b|\in \Lambda_{\beta}(\mathbb{G})$.
Hence, we can apply  \cref{lem:frac-maximal-pointwise} to $[|b|,M_{\alpha}]$ and $[ |b|, M]$ since $|b|\in \Lambda_{\beta}(\mathbb{G})$ and $|b|\ge 0$.

By using  \cref{lem:frac-maximal-pointwise},  \cref{lem:frac-maximal-pointwise-relation} and \cref{rem.frac-maximal-pointwise-relation}, for any $x\in B$,  we have
\begin{align*}
  \big|[|b|,M_{\alpha}](\dchi_{B})(x) \big| &\le \|b\|_{\Lambda_{\beta}(\mathbb{G})} M_{\alpha+\beta} (\dchi_{B})(x)   \le C \|b\|_{\Lambda_{\beta}(\mathbb{G})} |B|^{(\alpha+\beta)/Q}
\\ \intertext{and}
 \big| [ |b|, M] (\dchi_{B})(x)   \big| &\le \|b\|_{\Lambda_{\beta}(\mathbb{G})} M_{\beta} (\dchi_{B})(x)  \le  C \|b\|_{\Lambda_{\beta}(\mathbb{G})} |B|^{\beta/Q}.
\end{align*}
Thus,   it follows from \labelcref{inequ:proof-lem-non-negative-max-lip-41-3} that
\begin{align*}
 I_{2}&=  \dfrac{1}{|B|^{1+\beta/Q} }   \dint_{B}  \Big| |B|^{-\alpha/Q} M_{\alpha,B}(b)(x)  -M_{B}(b)(x)  \Big| \mathd x   \\
    &\le  \dfrac{C}{|B|^{1+(\alpha+\beta)/Q} }   \dint_{B}  \big|[|b|,M_{\alpha}](\dchi_{B})(x) \big| \mathd x   \\
    &\qquad +\dfrac{C}{|B|^{1+\beta/Q} }   \dint_{B} \big| [ |b|, M] (\dchi_{B})(x)   \big| \mathd x   \\
    &\le C\|b\|_{\Lambda_{\beta}(\mathbb{G})}.
\end{align*}

Putting the above estimates for $I_{1}$ and $I_{2}$ into \labelcref{inequ:proof-lem-non-negative-max-lip-41-2}, we obtain \labelcref{inequ:proof-lem-non-negative-max-lip-41}.

\labelcref{enumerate:nonlinear-frac-max-lip-2} $\xLongrightarrow[]{\ \  }$ \labelcref{enumerate:nonlinear-frac-max-lip-5}:\
Assume statement \labelcref{enumerate:nonlinear-frac-max-lip-2} is true. Reasoning as in the proof of \labelcref{enumerate:nonlinear-frac-max-lip-3} $\xLongrightarrow[]{\ \  }$ \labelcref{enumerate:nonlinear-frac-max-lip-4}, we have
\begin{align} \label{inequ:proof-lem-non-negative-max-lip-25}
\begin{aligned}
 \sup_{B} \dfrac{1}{|B|^{\beta/Q}} &\left(  \dfrac{1}{|B|} \dint_{B}  |b(x) -|B|^{-\alpha/Q}M_{\alpha,B}(b)(x)  |^{q} \mathd x \right)^{1/q}  <\infty
 \end{aligned}
\end{align}
for any $q$ for which there exists a $p$ such that $\frac{1}{q} =\frac{1}{p} - \frac{\alpha+\beta}{Q}$.

For any $s \in   [1,\infty)$, choosing an $r>Q/(Q-\beta)>1$, we have $1<rs(Q-\beta)/Q< rs$. Set $q=rs$ and define $p$ by $\frac{1}{p} =\frac{1}{q} + \frac{\alpha+\beta}{Q}$.
Noting that
\begin{align*}
    \dfrac{1}{s} &=    \dfrac{1}{rs} + \dfrac{1}{r's} = \dfrac{1}{q} + \dfrac{1}{r's},
\end{align*}
it follows from  \cref{lem:holder-inequality-Lie-group}, \labelcref{inequ:proof-lem-non-negative-max-lip-25} and   \cref{lem:norm-characteristic-functions-Lie-group} that
\begin{align*}
    \dfrac{1}{|B|^{\beta/Q}} & \left(  \dfrac{1}{|B|} \dint_{B} \big |b(x) -|B|^{-\alpha/Q}M_{\alpha,B}(b)(x)  \big |^{s} \mathd x \right)^{1/s}  \\
 &\le \dfrac{C}{|B|^{1/s+\beta/Q}}  \left( \dint_{B}  \big |b(x) -|B|^{-\alpha/Q}M_{\alpha,B}(b)(x)  \big |^{q} \mathd x \right)^{\frac{1}{q}}  \|\dchi_{B}\|_{L^{r's}(\mathbb{G}) }  \\
 &\le \dfrac{C}{|B|^{1/s-1/q-\frac{1}{r's}}}   \dfrac{1}{|B|^{\beta/Q}} \left( \dfrac{1}{|B|} \dint_{B}  \big |b(x) -|B|^{-\alpha/Q}M_{\alpha,B}(b)(x)  \big |^{q} \mathd x \right)^{\frac{1}{q}}  \\
 &\le C,
\end{align*}
which is what we want.

The proof  is completed.
\end{proof}

\subsection{Proof of \cref{thm:frac-max-lip}}


\begin{proof}[Proof of \cref{thm:frac-max-lip}]
Since the implications \labelcref{enumerate:thm-frac-max-lip-2} $\xLongrightarrow[]{\ \  }$ \labelcref{enumerate:thm-frac-max-lip-3}  and \labelcref{enumerate:thm-frac-max-lip-5} $\xLongrightarrow[]{\ \  }$ \labelcref{enumerate:thm-frac-max-lip-4}    follows readily,
we only need to prove \labelcref{enumerate:thm-frac-max-lip-1} $\xLongrightarrow[]{\ \  }$ \labelcref{enumerate:thm-frac-max-lip-2}, \labelcref{enumerate:thm-frac-max-lip-3} $\xLongrightarrow[]{\ \  }$ \labelcref{enumerate:thm-frac-max-lip-4},  \labelcref{enumerate:thm-frac-max-lip-4} $\xLongrightarrow[]{\ \  }$ \labelcref{enumerate:thm-frac-max-lip-1}
and \labelcref{enumerate:thm-frac-max-lip-2} $\xLongrightarrow[]{\ \  }$ \labelcref{enumerate:thm-frac-max-lip-5} (the proof structure is also shown in \Cref{fig:ps-equivalent-non}).

\labelcref{enumerate:thm-frac-max-lip-1} $\xLongrightarrow[]{\ \  }$ \labelcref{enumerate:thm-frac-max-lip-2}:\
Let    $b\in \Lambda_{\beta}(\mathbb{G})$, then, using \cref{def.lip-space} \labelcref{enumerate:def-lip-1}, we have
\begin{align}  \label{inequ:proof-frac-main-1-1}
\begin{aligned}
M_{\alpha,b} (f)(x) &= \sup_{B\ni x}   \dfrac{1}{|B|^{1-\alpha/Q}}  \dint_{B} |b(x)-b(y)| |f(y)| \mathd y  \\
&\le C\|b\|_{\Lambda_{\beta}(\mathbb{G})} \sup_{B\ni x}  \dfrac{1}{|B|^{1-\alpha/Q}}    \dint_{B}|\rho(y^{-1}x)|^{\beta} |f(y)| \mathd y     \\
&\le C\|b\|_{\Lambda_{\beta}(\mathbb{G})} \sup_{B\ni x}  \dfrac{1}{|B|^{1-(\alpha+\beta)/Q}} \dint_{B}  |f(y)| \mathd y        \\
&\le C\|b\|_{\Lambda_{\beta}(\mathbb{G})}  M_{\alpha+\beta} (f)(x).
\end{aligned}
\end{align}
Therefore, assertion \labelcref{enumerate:thm-frac-max-lip-2} follows from  \cref{lem:frac-maximal-kokilashvili1989fractional} and  \labelcref{inequ:proof-frac-main-1-1}.

\labelcref{enumerate:thm-frac-max-lip-3} $\xLongrightarrow[]{\ \ \ \ }$ \labelcref{enumerate:thm-frac-max-lip-4}:\
 For any fixed ball $B\subset \mathbb{G}$,  we have
\begin{align*}
  |b(x)-b_{B}|  &\le   \dfrac{1}{ |B| }    \dint_{B}   |b(x)-b(y)|  \mathd y  \\
   &= \dfrac{1}{ |B| }    \dint_{B}   |b(x)-b(y)| \dchi_{B} (y) \mathd y   \\
   &\le \dfrac{1}{ |B|^{\alpha/Q} }  M_{\alpha,b}(\dchi_{B})(x)
\end{align*}
 for all $x\in B$.
Then, for all $x\in \mathbb{G}$,  we get
\begin{align*}
|(b(x)-b_{B})\dchi_{B} (x)|  \le  \dfrac{1}{ |B|^{\alpha/Q} } M_{\alpha,b}(\dchi_{B})(x).
\end{align*}

  Since $ M_{\alpha,b} $ is bounded from    $L^{p}(\mathbb{G})$ to $L^{q}(\mathbb{G})$. Then,   by using assertion \labelcref{enumerate:thm-frac-max-lip-3} and \cref{lem:norm-characteristic-functions-Lie-group},  for any ball $B\subset \mathbb{G}$,  one obtains
\begin{align*}
 \dfrac{1}{|B|^{\beta/Q}} \Big( \dfrac{1}{|B|} \dint_{B} |b(x)-b_{B}|^{q} \mathd x \Big)^{1/q}
&\le \dfrac{1}{ |B|^{(\alpha+\beta)/Q}} \Big( \dfrac{1}{|B|} \dint_{B} \big( M_{\alpha,b}(\dchi_{B})(x) \big)^{q}  \mathd x \Big)^{1/q}  \\
&\le  \dfrac{C}{ |B|^{1/q+(\alpha+\beta)/Q}}   \|\dchi_{B}\|_{L^{p}(\mathbb{G})}   \\
&\le C.
\end{align*}
which gives  \labelcref{inequ:frac-max-lip-4} for $s  = q $ since $B$ is arbitrary and $C$ is independent of $B$.

\labelcref{enumerate:thm-frac-max-lip-4} $\xLongrightarrow[]{\ \ \ \ }$ \labelcref{enumerate:thm-frac-max-lip-1}:\
For any ball $B \subset \mathbb{G}$, it follows from H\"{o}lder's inequality (see \cref{lem:holder-inequality-Lie-group}), \cref{lem:norm-characteristic-functions-Lie-group} and assertion \labelcref{enumerate:thm-frac-max-lip-4} that
\begin{align*}
 \dfrac{1}{ |B|^{1+\beta/Q}}  \dint_{B} |b(x)- b_{B}| \mathd x
&\le  \dfrac{C}{|B|^{1+\beta/Q}} \Big( \dint_{B} |b(x)-b_{B}|^{q} \mathd x \Big)^{1/q} \Big(\dint_{B}   \dchi_{B}(x)    \mathd x  \Big)^{1/q'}   \\
&\le  \dfrac{C}{|B|^{\beta/Q}} \Big( \dfrac{1}{|B|} \dint_{B} |b(x)-b_{B}|^{q} \mathd x \Big)^{1/q} \\
&\le C.
\end{align*}
It follows from \cref{lem:2.2-li2003lipschitz}  and \cref{def.lip-space}  that $b\in \Lambda_{\beta}(\mathbb{G})$ since $B$ is an arbitrary ball in $\mathbb{G}$.

 \labelcref{enumerate:thm-frac-max-lip-2} $\xLongrightarrow[]{\ \  }$ \labelcref{enumerate:thm-frac-max-lip-5}:\ Similar to the course of the proof of \labelcref{enumerate:nonlinear-frac-max-lip-2} $\xLongrightarrow[]{\ \  }$ \labelcref{enumerate:nonlinear-frac-max-lip-5}, thus, we omit it.

The proof of \cref{thm:frac-max-lip} is completed.
\end{proof}



 \subsubsection*{Funding information:}

 This work was  financially supported by the Scientific Research Fund of APU (No.S022022177, for Zhao), Project of Heilongjiang Province Science and Technology Program (No.2019-KYYWF-0909, for Wu), the National Natural Science Foundation of China (No.11571160, for Wu), the Reform and Development Foundation for Local Colleges and Universities of the Central Government(No.2020YQ07, for Wu) and the Scientific Research Fund of Mudanjiang Normal University (No.D211220637, for Wu).

%
 \subsubsection*{Data availability statement:}  
 This manuscript has no associate data.


%
%
%


\phantomsection
\addcontentsline{toc}{section}{References}
%

\bibliography{wu-reference}

\end{document}